\newtheorem{theorem}{Theorem}
\newtheorem*{theorem*}{Theorem}
\newtheorem{proposition}[theorem]{Proposition}
\newtheorem{corollary}[theorem]{Corollary}
\newtheorem*{corollary*}{Corollary}
\theoremstyle{definition}
\def\Fl{\mathcal{F}\ell}
\def\GL{\operatorname{GL}}
\def\Sym{\operatorname{Sym}}
\def\CC{\mathbf{C}}
\def\S{\mathfrak{S}}
\author{Andrew Berget}
\email{berget@math.ucdavis.edu}
\address{Department of Mathematics\\
  University of California\\
  Davis, CA 95616\\
  USA} \date{\today} \title[Equality of symmetrized tensors and the
flag variety]{Equality of symmetrized tensors\\{\tiny and}\\the
  coordinate ring of the flag variety} \thanks{The author is partially
  supported by NSF grant DMS-0636297. The author thanks Victor Reiner,
  who advised the thesis where this proof first appeared.}
\begin{document}
\maketitle
\section{Introduction}
The goal of this note is to give a transparent proof of a result
of da Cruz and Dias da Silva on the equality of symmetrized
decomposable tensors. This will be done by explaining that their
result follows from the fact that the coordinate ring of a flag
variety is a unique factorization domain.

Let $\lambda$ be a partition of a positive integer $n$ and let
$\chi^\lambda$ be the irreducible character of the symmetric group
$\S_n$ corresponding to $\lambda$. There is a right action of $\S_n$
on $V^{\otimes n}$, where $V$ is a finite-dimensional complex vector
space, by permuting tensor positions. Let $T_\lambda$ be the
endomorphism of $V^{\otimes n}$ given by
\[
(v_1 \otimes \dots \otimes v_n)
T_\lambda=\frac{\chi^\lambda(1)}{n!}\sum_{\sigma \in \S_n}
\chi^\lambda(\sigma) v_{\sigma(1)} \otimes \dots \otimes
v_{\sigma(n)}.
\]

The question of when there is an equality,
\begin{align}\label{eq:equality}
(v_1 \otimes \dots \otimes v_n)
T_\lambda
=
(u_1 \otimes \dots \otimes u_n)
T_\lambda  
\end{align}
was studied by Merris \cite{merris}, and Chollet and Marcus
\cite{marcus1,marcus2} in the mid-1970s. Since then, many papers were
written on this question and partial results were given by a variety
of authors over the years (for example,
\cite{marcus3,dds3,dds4,fernandez,fonseca}). However, the question was
fully resolved only recently in two papers by da Cruz and Dias da
Silva \cite{dds2,dds1}. 

A tableau of shape $\lambda$ will be a filling of the numbers
$1,2,\dots,n$ into the boxes of the Young diagram of $\lambda$. A
tableaux is said to be standard if the numbers in both its rows and
columns increase.
\begin{theorem*}[da Cruz--Dias da Silva]
  There is an equality of symmetrized decomposable tensors as in
  \eqref{eq:equality} if and only if the following conditions are
  satisfied:
  \begin{enumerate}
  \item Every tableau of shape $\lambda$ whose columns index linearly
    independent subsets of $(v_1,\dots,v_n)$ also index linearly
    independent subsets of $(u_1,\dots,u_n)$.
  \item If the columns of a tableau of shape $\lambda$ index
    independent subsets of $(v_1,\dots,v_n)$, and $C_j$ denotes the
    numbers in column $j$ of this tableaux, then there is some
    permutation $\sigma$ such that for all $j$, the span of those
    $\{v_i: i \in C_j\}$ is equal to the span $\{u_i: i \in
    C_{\sigma(j)}\}$.
  \item In the situation above, the product the determinants relating
    $\{v_i: i \in C_j\}$ to $\{u_i: i \in C_{\sigma(j)}\}$ is $1$.
  \end{enumerate}
\end{theorem*}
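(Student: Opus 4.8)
The plan is to recast the identity \eqref{eq:equality} as a statement about unique factorization in the \emph{flag algebra} $R=\bigoplus_{\mu}\SS_\mu(V)$, the sum over all partitions $\mu$ with at most $\dim V$ parts of the Schur modules $\SS_\mu(V)$, made into a commutative graded ring by the Cartan product $\SS_\mu(V)\otimes\SS_\nu(V)\to\SS_{\mu+\nu}(V)$. This $R$ is the multihomogeneous coordinate ring of a flag variety of $V$, and the single imported fact is that $R$ is a unique factorization domain. From this I record what I need: the units of $R$ are the nonzero scalars; $R$ is graded by the free commutative monoid of partitions; the piece $\bigwedge^{\ell}V=\SS_{(1^{\ell})}(V)$ sits in a degree that is an atom of that monoid, so (using that $R$ is a domain) \emph{every} nonzero element of $\bigwedge^{\ell}V$ --- in particular every nonzero decomposable $v_{i_1}\wedge\dots\wedge v_{i_\ell}$ --- is irreducible, hence prime, in $R$; and two nonzero decomposable vectors are associates in $R$ exactly when they span the same subspace of $V$, in which case the unit relating them is the determinant of the change of basis between the two spanning sets.

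The first real step is a reduction. Here $T_\lambda$ is the central idempotent $e_\lambda\in\CC\S_n$ projecting $V^{\otimes n}$ onto its $\lambda$-isotypic part. For a bijective filling $t$ of the diagram of $\lambda$, with columns $C_1(t),\dots,C_c(t)$ and Young symmetrizer $y_t$ (column antisymmetrization followed by row symmetrization), one has $V^{\otimes n}y_t\cong\SS_\lambda(V)$, and under the identification of this copy of $\SS_\lambda(V)$ with the degree-$\lambda$ piece of $R$ one checks that $(w_1\otimes\dots\otimes w_n)\,y_t=\prod_j(\bigwedge_{i\in C_j(t)}w_i)$ in $R$ for all $w_1,\dots,w_n\in V$ (note $\sum_j(1^{|C_j(t)|})=\lambda$, so the product lands in the right degree). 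As $\rho$ ranges over $\S_n$ and $t$ over all bijective fillings, the elements $\rho\,y_t$ span the two-sided ideal $\CC\S_n e_\lambda$; hence \eqref{eq:equality} is equivalent to the family of identities $\prod_j(\bigwedge_{i\in C_j(t)}v_i)=\prod_j(\bigwedge_{i\in C_j(t)}u_i)$ in $R$, one for each bijective filling $t$. This reduction --- in particular, tracking the normalizations and the sign conventions hidden in the orderings of columns, so that condition (3) becomes literally a product of determinants --- is the one part that requires care; it is standard representation theory, and the remainder is short.

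Granting the reduction, I argue both directions. For ``$\Rightarrow$'', assume \eqref{eq:equality} and choose a filling $t$ with $\{v_i:i\in C_j(t)\}$ linearly independent for every $j$; if no such $t$ exists then both sides of \eqref{eq:equality} vanish and, by the reduction, so does the $u$-side, which is the degenerate case where the conditions hold vacuously (read symmetrically in the two tuples). Otherwise $\prod_j(\bigwedge_{i\in C_j(t)}v_i)$ is a nonzero product of primes in the domain $R$, so by the displayed identity each factor $\bigwedge_{i\in C_j(t)}u_i$ is nonzero --- this is condition (1) --- and we have two factorizations of one element into primes. Unique factorization supplies a permutation $\sigma$, which sends each column to one of the same length since associates lie in a common degree, with $\bigwedge_{i\in C_{\sigma(j)}(t)}u_i=d_j\bigwedge_{i\in C_j(t)}v_i$ for scalars $d_j\in\CC^\times$ with $\prod_j d_j=1$; that the wedges are associates gives the span equality of condition (2), and the $d_j$ are exactly the determinants of condition (3). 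For ``$\Leftarrow$'', assume conditions (1)--(3), read symmetrically in the $v$'s and $u$'s as the symmetry of the conclusion forces, and fix a filling $t$. If its columns are not all independent for the $v$'s they are not all independent for the $u$'s either, by (1), so both products vanish; otherwise (2) provides $\sigma$ with matching column spans, so $\bigwedge_{i\in C_{\sigma(j)}(t)}u_i=d_j\bigwedge_{i\in C_j(t)}v_i$, and reordering the product over $j$ (legitimate since $R$ is commutative) and invoking $\prod_j d_j=1$ from (3) yields the displayed identity for $t$. Since $t$ was arbitrary, \eqref{eq:equality} holds; the unique-factorization argument itself is a few lines, and the bookkeeping in the reduction is the only genuine obstacle.
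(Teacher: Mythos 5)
Your proposal is correct and follows essentially the same route as the paper: reduce from $T_\lambda$ to the Young symmetrizers, identify $w^\otimes c_T$ with a product of decomposable wedges in the multihomogeneous coordinate ring of the flag variety, and read off conditions (1)--(3) from unique factorization there. The only difference is one of detail: you spell out the UFD bookkeeping (wedges are prime because their degrees are atoms of the grading monoid, associates correspond to equal spans, the units multiply to $1$) that the paper compresses into the single sentence ``the equality of the products is exactly conditions (1), (2) and (3).''
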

The idea of the proof, and the structure of the paper, is as follows. We
first reduce to the case of applying Young symmetrizers to the
decomposable tensors. Then we use the fact that applying a Young
symmetrizer to a tensor can be viewed as multiplication in the
coordinate ring of a flag variety. The proof of the theorem follows by
by interpreting it as a statement about unique factorization in this
ring. 

The proof is straightforward, once a person becomes familiar with the
various guises of the representation theory of the general linear
group. Perhaps the myriad of ways of understanding these
representations explains why the result withstood understanding for so
long. Indeed, the description of the coordinate ring of a flag variety
used here goes back to Deruyts, and a modern treatment of it was known
to, among others, Towber \cite{towber} in the 1970s. That this
coordinate ring is a unique factorization domain is what makes the
theorem non-trivial. It is a vestige of the first fundamental theorem
of invariant theory, as explained in \cite[Chapter 9]{fulton}.  

A certain amount of familiarity the representation theory of the
general linear group will be assumed. We point the reader to Fulton's
book \cite{fulton} for a beautiful synthesis of all the relevant
ideas, giving explicit pointers to particular results as we use them.

In point of notation, we will write $v^\otimes$ for the tensor product
$v_1 \otimes v_2 \otimes \dots \otimes v_n$. 

All the results in this paper hold not just over $\CC$, but over an
arbitrary field of characteristic zero.
\section{Reduction}
Let $T$ be a tableau of shape $\lambda$, $a_T$ its row symmetrizer,
and $b_T$ its column antisymmetrizer. These are given by
\[
\sum_{\sigma \in \operatorname{Row}(T)} \sigma, \quad \sum_{\sigma \in
  \operatorname{Col}(T)} \operatorname{sign}(\sigma)\sigma,
\]
respectively. Here the sums are over the row and column groups of $T$,
which are the subgroups of $\S_n$ that stablize each row and each
column of $T$, respectively. For example, using cycle notation for
permutations in $\S_n$, if
\[
T = \young(234,15) 
\]
then $b_T = (1-(12))(1-(35))$ while
\[
a_T = \left(1+(23)+(24)+(34)+(234)+(243)\right)(1+(15)).
\]
A product $c_T := b_Ta_T$ is called a \textit{Young symmetrizer} and
the right ideal in $\CC\S_n$ generated by a Young symmetrizer is an
irreducible $\CC\S_n$-module with character $\chi^\lambda$
\cite[Chapter 7]{fulton} while the image of $b_T a_T$ on $V^{\otimes
  n}$ is zero, or irreducible for the diagonal action of the general
linear group with highest weight $\lambda$ \cite[Proposition
8.1]{fulton}. It is clear that $v^\otimes b_T$ is not zero if and only
if the sets of vectors indexed by the columns of the tableau $T$ are
linearly independent.

\begin{proposition}\label{prop:red}
  There is an equality $v^\otimes T_\lambda = u^\otimes T_\lambda$ if
  and only if for all tableau $T$ of shape $\lambda$ there is an
  equality $v^\otimes c_T = u^\otimes c_T$.
\end{proposition}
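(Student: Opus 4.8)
The plan is to peel the tensor space away entirely and work inside the group algebra $\CC\S_n$, recognizing both $T_\lambda$ and every Young symmetrizer $c_T$ as living in a single simple two-sided ideal, after which the proposition is pure formal manipulation.

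First I would observe that $v^\otimes T_\lambda$ is nothing but $v^\otimes$ acted on (via the given right action of $\S_n$ on $V^{\otimes n}$) by the algebra element
\[
e_\lambda \;=\; \frac{\chi^\lambda(1)}{n!}\sum_{\sigma\in\S_n}\chi^\lambda(\sigma)\,\sigma\ \in\ \CC\S_n .
\]
Because every $\sigma\in\S_n$ is conjugate to $\sigma^{-1}$ we may replace $\chi^\lambda(\sigma)$ by $\chi^\lambda(\sigma^{-1})$, so $e_\lambda$ is the usual primitive central idempotent of $\CC\S_n$ attached to $\lambda$. Write $I_\lambda:=\CC\S_n e_\lambda$ for the simple two-sided ideal (Wedderburn block) it generates; this is the $\chi^\lambda$-isotypic part of $\CC\S_n$, and $e_\lambda$ is its identity element, so $e_\lambda x = x e_\lambda = x$ for every $x\in I_\lambda$. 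Note also that $x\mapsto v^\otimes\cdot x$ is a homomorphism of right $\CC\S_n$-modules, so such equalities may be multiplied around freely.

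Next I would assemble two standard facts about the Young symmetrizers $c_T = b_T a_T$. By the result quoted from \cite[Chapter 7]{fulton}, the right ideal $c_T\,\CC\S_n$ is irreducible with character $\chi^\lambda$, hence is contained in $I_\lambda$; in particular $e_\lambda c_T = c_T$ for every tableau $T$ of shape $\lambda$. Second, conjugation permutes tableaux, $\rho\,c_T\,\rho^{-1} = c_{\rho T}$ for $\rho\in\S_n$, so the right ideal $J:=\sum_T c_T\,\CC\S_n$, summed over \emph{all} tableaux $T$ of shape $\lambda$, is also stable under left multiplication: it suffices to check this for $\rho\in\S_n$, where $\rho\,c_T\,\CC\S_n = c_{\rho T}\,\rho\,\CC\S_n = c_{\rho T}\,\CC\S_n$. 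Thus $J$ is a nonzero two-sided ideal contained in the simple ideal $I_\lambda$, whence $J = I_\lambda$; in particular $e_\lambda\in J$, so $e_\lambda = \sum_T c_T\,g_T$ for suitable $g_T\in\CC\S_n$, all but finitely many zero.

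With these in hand the proposition falls out in both directions. If $v^\otimes T_\lambda = u^\otimes T_\lambda$, i.e. $v^\otimes e_\lambda = u^\otimes e_\lambda$, then for each $T$ multiply on the right by $c_T$ and use $e_\lambda c_T = c_T$ to get $v^\otimes c_T = v^\otimes e_\lambda c_T = u^\otimes e_\lambda c_T = u^\otimes c_T$. Conversely, if $v^\otimes c_T = u^\otimes c_T$ for every $T$, then applying $g_T$ on the right and summing gives $v^\otimes e_\lambda = \sum_T v^\otimes c_T g_T = \sum_T u^\otimes c_T g_T = u^\otimes e_\lambda$, that is, $v^\otimes T_\lambda = u^\otimes T_\lambda$. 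The only step carrying real content — and the one I expect to spend a sentence justifying with care — is the identity $J = I_\lambda$: no single Young symmetrizer generates the whole block as a right ideal, but the family indexed by all tableaux of shape $\lambda$ does, which is precisely why the statement must quantify over all $T$.
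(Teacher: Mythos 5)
Your proof is correct. The forward implication is exactly the paper's: both arguments identify $T_\lambda$ with the primitive central idempotent $e_\lambda$ of the $\chi^\lambda$-block and right-multiply the hypothesis by $c_T$, using $e_\lambda c_T = c_T$. The converse is where you take a genuinely different (if closely related) route. The paper recovers $T_\lambda$ as a nonzero scalar multiple of $\sum_T c_T$ --- justified in a footnote by noting that this sum is nonzero, central, and supported in the block --- and then simply adds the equalities $v^\otimes c_T = u^\otimes c_T$ over all $T$. You instead show that $J=\sum_T c_T\,\CC\S_n$ is a two-sided ideal (via $\rho\, c_T\,\rho^{-1}=c_{\rho T}$) sitting inside the simple block $I_\lambda$, conclude $J=I_\lambda$, write $e_\lambda=\sum_T c_T g_T$, and apply the $g_T$ on the right before summing. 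Your version buys some robustness: you never need to check that $\sum_T c_T$ is nonzero or essentially idempotent, only that the $c_T$ generate a nonzero two-sided ideal in a simple block; the cost is the unspecified coefficients $g_T$. The paper's version is shorter because the scalar relating $\sum_T c_T$ to $T_\lambda$ never needs to be computed. Both arguments ultimately rest on the same fact --- conjugation permutes the Young symmetrizers of a fixed shape --- which, as you correctly observe, is precisely why the proposition must quantify over all tableaux $T$.
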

\begin{proof}
  Recall that $T_\lambda$ is the projector of $\CC\S_n$ to its
  $\chi^\lambda$ isotypic component.  Now,
  \[
  v^\otimes T_\lambda = u^\otimes T_\lambda \implies
  v^\otimes T_\lambda c_T = u^\otimes T_\lambda c_T \implies
  v^\otimes  c_T = u^\otimes  c_T.
  \]
  The first implication is trivial and the second follows from the
  fact that $T_\lambda$ fixes any Young symmetrizer $c_T$ in $\CC\S_n$
  if $T$ has shape $\lambda$.

  For the other implication, recall the fact that the sum $\sum_T c_T$
  over all tableaux of shape $\lambda$ is a non-zero scalar multiple
  of $T_\lambda$\footnote{This follows from the fact $\sum_T c_T$ is
    not zero in $\CC\S_n$, central, idempotent, and annhilates Young
    symmetrizers of different shapes.}. Hence, if $v^\otimes c_T =
  u^\otimes c_T$ for all $T$ of shape $\lambda$ then,
  \[
  \sum_T v^\otimes c_T = \sum_T u^\otimes c_T \implies v^\otimes
  T_\lambda = u^\otimes T_\lambda.\qedhere
  \]
\end{proof}

\section{Flag varieties and the shape algebra}
Denote the dimension of $V$ by $r$. Let $\Fl_{r}$ be the complex
variety of complete flags in $V$:
\[
\Fl_{r} = \{ 0 \subset V_1 \subset V_2 \subset \dots \subset V_{r-1}
\subset V : \dim V_i = i, i \leq i \leq r\}.
\]
The coordinate ring of $\Fl_{r}$, sometimes called the shape algebra
\cite{towber}, can be written as a quotient
\[
A = A(\Fl_r):=\Sym \left(\textstyle \bigwedge (\CC^r) \right)/ Q,
\]
where $\Sym$ and $\bigwedge$ denote the symmetric and exterior algebra
functors, and $Q$ is an ideal of quadratic relations whose precise
definition is not needed (see \cite[Proposition~9.1]{fulton} for
this). We can write this symmetric algebra of $\bigwedge \CC^r$ as,
\[
\Sym \left(\textstyle \bigwedge (\CC^r) \right) =
\bigoplus_{a_1,a_2,\dots,a_r \geq 0} \Sym^{a_r}(\textstyle \bigwedge^r
\CC^r) \otimes \dots \otimes \Sym^{a_1}(\textstyle \bigwedge^1 \CC^r),
\]
and the ideal $Q$ is homogeneous with respect to the obvious
$\mathbb{N}^r$-grading. The graded piece of the quotient $A$ that is
indexed by $(a_1,a_2,\dots,a_r)$ can, instead, be indexed by the
partition that has $a_i$ columns of length $i$, for
$i=1,2,\dots,r$. We write the graded piece of $A$ corresponding to
$\lambda$ as $A_\lambda$. In this way we see that the coordinate ring
of $\Fl_r$ can be written as a direct sum $A = \bigoplus A_\lambda$,
the sum over partitions $\lambda$ with at most $r$ parts.

If we have a partition $\lambda$ with $a_i$ columns of length $i$ for
$1 \leq i \leq r$, then it is a fact that $A_\lambda$ is the
irreducible representation of $\GL(V)$ with highest weight $\lambda$
\cite[Theorem 8.2]{fulton}. Further, for all tableau $T$ of shape
$\lambda$ there is a unique isomorphism of $\GL_r(V)$ representations
\cite[Proposition 8.1]{fulton},
\[
A_\lambda \approx V^{\otimes n} c_T.
\]
Let $T$ denote the \textit{column super standard} tableau of shape
$\lambda$, whose entries are $1,2,\dots,n$ when read top-to-bottom,
left-to-right. By the universal property of $A_\lambda$ \cite[Theorem
8.1]{fulton} note that the composition of multiplication maps,
\begin{multline*}
  (\textstyle \bigwedge^r \CC^r)^{\otimes a_r} \otimes \dots \otimes
  (\textstyle \bigwedge^1 \CC^r)^{\otimes a_1} \to
  \Sym^{a_r}(\textstyle \bigwedge^r \CC^r) \otimes \dots \otimes
  \Sym^{a_1}(\textstyle \bigwedge^1 \CC^r)\\ \to A_\lambda \to
  V^{\otimes n} c_T,
\end{multline*}
takes an element,
\[
(v_1 \wedge \dots \wedge v_r) \otimes \dots \otimes (v_{(r-1)a_r}
\wedge \dots \wedge v_{ra_r}) \otimes \dots \otimes v_{n-a_1} \otimes
\dots \otimes v_n,
\]
to $(v_1 \otimes v_2 \otimes \dots \otimes v_n) c_T$.
\section{Proofs}
We can now, in one fell swoop, prove two results of Dias da Silva and
Fonseca (unpublished), as well as the theorem of da Cruz and Dias da
Silva.
\begin{theorem}[Dias da Silva--Fonseca]
  The tensor $v^\otimes c_T$ is not zero if and only if $v^\otimes
  b_T$ is not zero if and only if the columns of $T$ index linearly
  independent subsets of $(v_1,\dots,v_n)$.
\end{theorem}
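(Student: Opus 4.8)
The plan is to prove the two equivalences in turn. The equivalence ``$v^\otimes b_T \neq 0$ iff the columns of $T$ index independent subsets of $(v_1,\dots,v_n)$'' is the elementary observation already recorded before Proposition~\ref{prop:red}: writing $b_T$ as the product of the signed sums over the individual column groups and permuting tensor slots so that each column's positions become contiguous identifies $v^\otimes b_T$ with a pure tensor whose factors are the wedge products $\bigwedge_{i \in C_j} v_i$, one per column $j$; since a permutation of tensor slots is an automorphism of $V^{\otimes n}$ and a pure tensor vanishes exactly when one of its factors does, $v^\otimes b_T \neq 0$ iff every $\bigwedge_{i \in C_j} v_i \neq 0$, i.e.\ every column of $T$ is independent. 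It then remains to show $v^\otimes c_T \neq 0 \iff v^\otimes b_T \neq 0$. One implication is immediate from $c_T = b_T a_T$: if $v^\otimes b_T = 0$ then $v^\otimes c_T = (v^\otimes b_T)a_T = 0$. The substance is the converse: if every column of $T$ is independent, then $v^\otimes c_T \neq 0$.

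First I would reduce to the column super standard tableau. Given an arbitrary tableau $T'$ of shape $\lambda$, pick $\pi \in \S_n$ so that $T'$ is obtained from the column super standard $T$ by relabelling each entry $i$ as $\pi(i)$. Then $\operatorname{Row}(T') = \pi\operatorname{Row}(T)\pi^{-1}$ and $\operatorname{Col}(T') = \pi\operatorname{Col}(T)\pi^{-1}$, hence $a_{T'} = \pi a_T \pi^{-1}$, $b_{T'} = \pi b_T \pi^{-1}$, and $c_{T'} = \pi c_T \pi^{-1}$. Setting $w_i = v_{\pi(i)}$, the right $\S_n$-action gives $v^\otimes c_{T'} = (w^\otimes c_T)\pi^{-1}$, and $\pi^{-1}$ acts invertibly on $V^{\otimes n}$, so $v^\otimes c_{T'} \neq 0$ iff $w^\otimes c_T \neq 0$. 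Moreover column $j$ of $T'$ indexes an independent subset of $(v_1,\dots,v_n)$ precisely when column $j$ of $T$ indexes an independent subset of $(w_1,\dots,w_n)$, so it suffices to treat the column super standard $T$.

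For column super standard $T$ I would invoke the computation of Section~3: under the isomorphism $A_\lambda \approx V^{\otimes n}c_T$ of $\GL(V)$-representations, the composite of the multiplication maps carries the element $(v_1 \wedge \dots) \otimes \dots \otimes (\dots \wedge v_n)$, grouped according to the columns of $T$, to $v^\otimes c_T$. Thus, via this identification, $v^\otimes c_T$ \emph{is} the product in the graded ring $A = A(\Fl_r)$ of the exterior monomials $\bigwedge_{i \in C_j} v_i \in A_{(1^{|C_j|})}$, taken over all columns $j$. If every column of $T$ is independent, each such factor is a nonzero element of $A$; since $\Fl_r$ is irreducible its coordinate ring $A$ is an integral domain (indeed a unique factorization domain, as recalled in the introduction), so the product of the factors is nonzero in $A_\lambda$, whence $v^\otimes c_T \neq 0$. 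This closes the loop.

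The argument is short, and I expect the only real care to be in the last step: verifying that the product of exterior monomials in $A$ genuinely corresponds, under Fulton's dictionary (\cite[Propositions 8.1, 9.1]{fulton}), to the action of $c_T$ on $v^\otimes$ — this is exactly the content of Section~3 — together with keeping the right-module conventions straight in the conjugation reduction. Once that bookkeeping is in place, the heart of the matter is merely that a product of nonzero elements of a domain is nonzero.
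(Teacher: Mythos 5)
Your proposal is correct and follows essentially the same route as the paper: identify $v^\otimes c_T$ with the product of the column wedges $\bigwedge_{i\in C_j}v_i$ in the coordinate ring $A$ via the Section~3 multiplication maps, and conclude from the fact that $A$ is an integral domain. You additionally spell out two points the paper leaves implicit --- the conjugation/relabelling reduction to the column super standard tableau and the elementary equivalence involving $v^\otimes b_T$ --- and both of these are handled correctly.
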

\begin{proof}
  Let $C_j$ denote the set of numbers in the $j$th column of $T$. Then
  $v^\otimes c_T$ is not zero if and only if the product of the elements
  \[
  \bigwedge_{i \in C_1} v_i, \bigwedge_{i \in C_2} v_i, \bigwedge_{i
    \in C_3} v_i,\dots
  \]
  is not zero in $A$. However, $A$ is an integral domain
  \cite[Proposition 8.2]{fulton} hence the product is not zero if and
  only if each of its factors is not zero. The latter is true if and
  only if each set of vectors $\{v_i: i \in C_j\}$ is linearly
  independent.
\end{proof}
Combining this result with Proposition~\ref{prop:red} we have given
another proof of Gamas's theorem on the vanishing of symmetrized
decomposable tensors (\textit{cf.}, \cite{berget}).
\begin{corollary}[Gamas]
  The symmetrized tensor $v^\otimes T_\lambda$ is not zero if and only
  there is a tableau of shape $\lambda$ whose columns index linearly
  independent sets.
\end{corollary}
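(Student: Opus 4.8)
The plan is to obtain this as an immediate consequence of Proposition~\ref{prop:red} together with the Dias da Silva--Fonseca theorem just proved, exactly as the sentence preceding the corollary suggests. First I would specialize Proposition~\ref{prop:red} to the situation where the second decomposable tensor is the zero tensor: taking $u_i = 0$ for every $i$ gives $u^\otimes = 0$, hence $u^\otimes T_\lambda = 0$ and $u^\otimes c_T = 0$ for every tableau $T$ of shape $\lambda$. With this choice, Proposition~\ref{prop:red} asserts that $v^\otimes T_\lambda = 0$ if and only if $v^\otimes c_T = 0$ for \emph{all} tableaux $T$ of shape $\lambda$. Negating both sides, $v^\otimes T_\lambda \neq 0$ if and only if there \emph{exists} a tableau $T$ of shape $\lambda$ with $v^\otimes c_T \neq 0$.

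Second, I would substitute the characterization furnished by the Dias da Silva--Fonseca theorem, namely that $v^\otimes c_T \neq 0$ precisely when the columns of $T$ index linearly independent subsets of $(v_1,\dots,v_n)$. Feeding this equivalence into the statement from the previous paragraph yields: $v^\otimes T_\lambda \neq 0$ if and only if there is a tableau of shape $\lambda$ whose columns index linearly independent sets. This is exactly the assertion of the corollary.

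There is no real obstacle here; the proof is a two-line combination of results already in hand. The only points worth a moment's care are that Proposition~\ref{prop:red} is genuinely applicable to the zero tensor — nothing in its statement or proof requires the $u_i$ to be nonzero, since the argument runs purely inside $\CC\S_n$ acting on $V^{\otimes n}$ — and that the universally quantified equivalence in Proposition~\ref{prop:red} negates to an existentially quantified one, which is what produces the "there is a tableau" in the conclusion. Both are routine, so the write-up will be just a few sentences.
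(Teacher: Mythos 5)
Your proposal is correct and matches the paper's intent exactly: the paper gives no separate proof, merely noting that the corollary follows by ``combining'' the Dias da Silva--Fonseca theorem with Proposition~\ref{prop:red}, and your specialization $u_i=0$ (together with the observation that the proposition's proof is purely formal in $\CC\S_n$ and so tolerates the zero tensor) is the natural way to fill in that combination. Nothing further is needed.
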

The following slightly stronger result was first due to Dias da Silva
and Fonseca (unpublished).
\begin{corollary}[Dias da Silva--Fonseca]
  The symmetrized tensor $v^\otimes T_\lambda$ is not zero if and only
  there is a standard tableau of shape $\lambda$ whose columns index
  linearly independent sets.
\end{corollary}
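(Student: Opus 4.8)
The plan is to treat the two directions separately. The ``if'' direction is immediate: a standard tableau is in particular a tableau of shape $\lambda$, so if its columns index linearly independent sets then $v^\otimes T_\lambda \neq 0$ by Gamas's corollary. For the ``only if'' direction I would apply Gamas's corollary in the other direction to obtain \emph{some} tableau of shape $\lambda$ whose columns index linearly independent sets, so that the remaining content is the purely combinatorial assertion: if $\{1,\dots,n\}$ can be partitioned into blocks whose associated sets of $v_i$'s are linearly independent, with block sizes equal to the column lengths of $\lambda$ in some order, then it can be partitioned into such blocks that moreover form the columns of a standard tableau.

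To prove this assertion I would use a greedy construction. Scan $1,2,\dots,n$ in increasing order, placing each $i$ into the leftmost column $j$ that is not yet filled to its target height $\lambda'_j$, that is either the first column or is strictly shorter than column $j-1$, and such that adjoining $v_i$ to the vectors already placed in column $j$ leaves them linearly independent. Any tableau produced this way is automatically standard, since entries increase down each column (one always appends at the bottom) and along each row (the box immediately to the left, if any, was filled at an earlier stage), so it only remains to check that the procedure never stalls. For this I would maintain the invariant that the partially filled tableau still extends to a tableau of shape $\lambda$ with linearly independent columns; the base case of the empty tableau is exactly the hypothesis supplied by Gamas's corollary.

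The crux, and the step I expect to be the main obstacle, is the inductive step: that the greedy choice preserves this invariant. Fixing a completion $Q$ of the current partial tableau $P$, one checks first that the column of $Q$ containing the next entry $i$ is a legal choice for the greedy rule, so that the column $c$ the rule actually selects lies weakly to its left; if $c$ is the column used by $Q$ there is nothing more to do, and otherwise one must produce a new completion that places $i$ in column $c$, compensating by trading entries between adjacent columns through matroid exchange moves --- swapping an element of one column for an element of a neighbour while keeping both columns independent --- in a way that leaves the overall shape, and the part already agreeing with $P$, undisturbed. Arranging these exchanges so that independence of \emph{every} column survives is the delicate point, and it is powered by the symmetric exchange property of matroids, which is really the only ingredient needed beyond Gamas's corollary. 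The same idea can be organized more symmetrically by choosing, among all partitions into independent sets of the prescribed column sizes, one whose sorted columns read off in reading order form the lexicographically least word, and arguing that a descent within a row of it could be removed by a single exchange to yield a lexicographically smaller such word, a contradiction.
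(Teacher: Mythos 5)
Your reduction is fine: by Gamas's corollary the statement is equivalent to the purely combinatorial claim that if $\{1,\dots,n\}$ admits a partition into linearly independent sets of sizes $\lambda'_1,\dots,\lambda'_k$, then some \emph{standard} tableau of shape $\lambda$ has independent columns. That claim is true, but your proof of it has a genuine gap, and in fact the specific greedy rule you propose provably fails. Take $\lambda=(2,2)$ and $v_1=e_1$, $v_2=e_2$, $v_3=v_4=e_3$ in $\CC^3$. The standard tableau with columns $\{1,3\}$ and $\{2,4\}$ has independent columns, so your invariant holds for the empty tableau and after placing $1$ in column $1$. At the next step, however, your rule puts $2$ into column $1$ (it is the leftmost unfilled column and $\{v_1,v_2\}$ is independent), and the resulting partial tableau admits \emph{no} completion: column $2$ is forced to be $\{3,4\}$, which is dependent. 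So the greedy choice does not preserve the invariant, and no trading of entries between adjacent columns can rescue it, because the completion you would need to produce does not exist. The difficulty is therefore not merely ``delicate'': the leftmost-feasible rule must sometimes be overridden by global information, and neither the symmetric exchange property nor the lex-least variant you sketch is shown to supply it. This standardization lemma is the entire content of the corollary, and you have not proved it.

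The paper sidesteps the combinatorics entirely with an algebraic straightening argument: for any tableau $T$ of shape $\lambda$ one has $c_T=\sum_S c_S x_S$ with $S$ ranging over standard tableaux and $x_S\in\CC\S_n$, so $v^\otimes c_T\neq 0$ forces $v^\otimes c_S\neq 0$ for some standard $S$, and the Dias da Silva--Fonseca theorem proved just before (via the integral domain property of $A$) then says the columns of $S$ are independent. If you want to keep a combinatorial route you must either prove the standardization lemma by a more global argument or import it from the literature; as written, the crux of your proof is not only unproven but, in the form stated, false.
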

Recall that a tableau is standard if the numbers in each row and
column increase.
\begin{proof}
  Suppose that $v^\otimes c_T$ is not zero, but $T$ is not a standard
  tableau. It is well known that there is an equality of the form $c_T
  = \sum_j c_{S} x_S$, where the sum is over standard tableaux of
  shape $\lambda$ and $x_S \in \CC\S_n$. The result follows.
\end{proof}
We now give the promised proof of the theorem on equality of
symmetrized decomposable tensors.
\begin{theorem}[da Cruz--Dias da Silva]
  There is an equality of symmetrized decomposable tensors as in
  \eqref{eq:equality} if and only if the following conditions are
  satisfied:
  \begin{enumerate}
  \item Every tableau of shape $\lambda$ whose columns index a
    linearly independent subsets of $(v_1,\dots,v_n)$ also index
    linearly independent subsets of $(u_1,\dots,u_n)$.
  \item If the columns of a tableau of shape $\lambda$ index
    independent subsets of $(v_1,\dots,v_n)$ and $C_j$ denotes the
    numbers in column $j$ of this tableaux, then there is some
    permutation $\sigma$ such that for all $j$,
    \[
    \bigwedge_{i \in C_j} v_i = c_j\bigwedge_{i \in C_{\sigma(j)}} u_i
    \]
    for some non-zero scalar $c_j$. That is, for all $j$, the span of
    those $v_i$ indexed by $C_j$ is equal to the span of those $u_i$
    indexed by $C_{\sigma(j)}$.
  \item In the situation above, the product of all the $c_j$s is $1$.
  \end{enumerate}
\end{theorem}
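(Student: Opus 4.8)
The plan is to push the equality \eqref{eq:equality} through the dictionary of Sections~2 and~3 and then to read it off from the fact that the coordinate ring $A$ of $\Fl_r$ is a unique factorization domain. First I would invoke Proposition~\ref{prop:red} to replace \eqref{eq:equality} by the family of equalities $v^\otimes c_T = u^\otimes c_T$, one for each tableau $T$ of shape $\lambda$. Fixing such a $T$ with columns $C_1,\dots,C_p$, I would extend the computation of Section~3 from the column super standard tableau to an arbitrary $T$: since every tableau of shape $\lambda$ is obtained from the super standard one by relabelling $1,\dots,n$ via some $\pi\in\S_n$, since $c_{\pi T}=\pi c_T\pi^{-1}$, and since the right $\S_n$-action only permutes tensor slots, under the isomorphism $V^{\otimes n}c_T\approx A_\lambda$ the tensor $v^\otimes c_T$ corresponds to a nonzero scalar (depending only on $T$) times the product $F_T:=\prod_{j=1}^{p}\bigwedge_{i\in C_j}v_i$ in $A$. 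Hence $v^\otimes c_T=u^\otimes c_T$ if and only if $F_T=G_T$, where $G_T:=\prod_{j=1}^{p}\bigwedge_{i\in C_j}u_i$; moreover, as $T$ varies the unordered column sets $\{C_1,\dots,C_p\}$ run over all set partitions of $\{1,\dots,n\}$ whose block sizes are the parts of $\lambda'$, and neither side is sensitive to the order of the columns or of the entries within a column.

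The one substantive ingredient I would isolate is that each nonzero factor $\bigwedge_{i\in C_j}v_i$ is a prime element of $A$. This is immediate from the grading of Section~3: such an element is homogeneous in the piece of $A$ indexed by the one-column partition $(1^{|C_j|})$, whose $\mathbb N^r$-degree $e_{|C_j|}$ is not a sum of two nonzero vectors of $\mathbb N^r$; hence in any factorization one factor has degree $0$, i.e. is a nonzero element of $A_0=\CC$ and so a unit of $A$. Thus the element is irreducible, hence prime, since $A$ is a unique factorization domain.

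For the forward direction I would assume \eqref{eq:equality}, i.e. $F_T=G_T$ for all $T$. If the columns of $T$ index independent subsets of $(v_1,\dots,v_n)$, then by the theorem of Dias da Silva and Fonseca proved above $v^\otimes c_T\neq 0$, so $G_T=F_T\neq 0$, and since $A$ is a domain each $\bigwedge_{i\in C_j}u_i$ is nonzero: this is condition~(1). Granting this, $F_T=G_T$ is an equality of products of $p$ primes in a unique factorization domain, so unique factorization supplies a bijection $\sigma$ of $\{1,\dots,p\}$ and scalars $c_j\in\CC^\times$ with $\bigwedge_{i\in C_j}v_i=c_j\bigwedge_{i\in C_{\sigma(j)}}u_i$ for all $j$; comparing degrees forces $|C_j|=|C_{\sigma(j)}|$, and this is condition~(2). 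Substituting back into $F_T=G_T$, reindexing the right-hand product by commutativity of $A$, and cancelling the nonzero element $G_T$, gives $\prod_j c_j=1$, which is condition~(3).

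For the converse I would assume conditions~(1)--(3) together with their images under exchanging the $v_i$ and the $u_i$ (the symmetric reading the statement intends; in any case these mirror conditions follow from the necessity direction, as \eqref{eq:equality} is symmetric). Given a tableau $T$: if some $\{v_i:i\in C_j\}$ is dependent then $F_T=0$ in the domain $A$, and by the mirror of condition~(1) some $\{u_i:i\in C_j\}$ is dependent too, so $G_T=0=F_T$; otherwise every column of $T$ is $v$-independent, hence $u$-independent by~(1), and conditions~(2) and~(3) give $F_T=\prod_j c_j\bigwedge_{i\in C_{\sigma(j)}}u_i=\big(\prod_j c_j\big)G_T=G_T$. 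Either way $v^\otimes c_T=u^\otimes c_T$ for every $T$, so \eqref{eq:equality} holds by Proposition~\ref{prop:red}. I expect the only real obstacle to be bookkeeping rather than mathematics: making the identification $v^\otimes c_T\leftrightarrow F_T$ precise for an arbitrary (not merely super standard) tableau, and checking that the bijection $\sigma$ produced by unique factorization respects column sizes and yields exactly the scalars appearing in conditions~(2) and~(3). The genuine mathematical input is confined to the single fact that $A$ is a unique factorization domain.
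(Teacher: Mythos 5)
Your proof takes essentially the same route as the paper: reduce via Proposition~\ref{prop:red} to the family of equalities $v^\otimes c_T = u^\otimes c_T$, translate each to an equality of products of wedges in $A$, and invoke unique factorization. You supply two details the paper leaves implicit, both correctly: the bookkeeping that identifies $v^\otimes c_T$ with (a fixed nonzero multiple of) $F_T=\prod_j\bigwedge_{i\in C_j}v_i$ for an arbitrary tableau $T$, via $c_{\pi T_0}=\pi c_{T_0}\pi^{-1}$; and the observation that a nonzero column-wedge is irreducible in $A$ because its $\mathbb{N}^r$-degree $e_{|C_j|}$ is indecomposable, hence prime since $A$ is a UFD. (For the irreducibility step one should also note that in an $\mathbb{N}^r$-graded domain the factors of a homogeneous element are themselves homogeneous, which follows by comparing extremal graded components with respect to any monomial order; you assume this silently.)

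You are also right to flag the asymmetry in the sufficiency direction. Conditions~(1)--(3) as written are vacuous whenever no tableau of shape $\lambda$ has $v$-independent columns, and in that situation they do not force $u^\otimes T_\lambda=0$: for $\lambda=(1,1)$, $v_1=v_2$, and $u_1,u_2$ independent, (1)--(3) hold trivially but $v^\otimes T_\lambda=0\neq u^\otimes T_\lambda$. So the converse genuinely needs the mirror of~(1) (or a tacit assumption that both symmetrized tensors are nonzero), and adding it, as you do, is the correct repair. However, the parenthetical claim that the mirror conditions ``follow from the necessity direction'' is circular: necessity presupposes the equality \eqref{eq:equality}, which is exactly what sufficiency is trying to establish, so you cannot appeal to it there. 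That remark should simply be dropped; your argument with the mirror hypothesis added in is otherwise complete and matches the paper's intent.
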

\begin{proof}
  After relabeling the vectors, it is sufficient to check that
  $v^\otimes c_T = u^\otimes c_T$ for $T$ the column super standard
  tableau of shape $\lambda$. Interpreting this statement in the
  coordinate ring $A$, this is stating that the product in $A$ of the
  wedges
  \[
  \bigwedge_{i \in C_1} v_i, \bigwedge_{i \in C_2} v_i, \bigwedge_{i \in C_3} v_i,\dots,
  \]
  is equal to the product of the wedges
  \[
  \bigwedge_{i \in C_1} u_i, \bigwedge_{i \in C_2} u_i, \bigwedge_{i
    \in C_3} v_i, \dots.
  \]
  However, $A$ is a unique factorization domain
  \cite[Section~9.2]{fulton} and the equality of the products is
  exactly conditions (1), (2) and (3).
\end{proof}

\end{document}